\newcommand{\Z}{\mathbb{Z}}
\newcommand{\ee}{\boldsymbol{e}}
\newcommand{\ff}{\boldsymbol{f}}
\newcommand{\sbgrp}[1]{\langle #1\rangle}
\newtheorem{Theorem}{Theorem}
\newtheorem{Proposition}[Theorem]{Proposition}
\newtheorem{Lemma}[Theorem]{Lemma}
\theoremstyle{remark}
\newtheorem{Remark}{Remark}
\DeclareMathOperator{\val}{val}
\title{Bounded Generation of Submonoids of Heisenberg Groups}
\author{Doron Shafrir \\
{\small doron.abc@gmail.com}}
\date{}
\begin{document}
\maketitle
\begin{abstract}
   If $G$ is a nilpotent group and $[G,G]$ has Hirsch length $1$, then every f.g.\ submonoid of $G$ is boundedly generated, i.e.\ a product of cyclic submonoids. Using a reduction of Bodart, this implies the decidability of the submonoid membership problem for nilpotent groups $G$ where $[G,G]$ has Hirsch length $2$.
\end{abstract}
\section{Introduction}
\subsection{Overview and Defintions}
All the decision problems we study are uniform: The input includes a finite presentation of a nilpotent group $G$, a description of a certain rational subset $R\subseteq G$, an element $g\in G$, and we have to decide whether $g\in R$. Elements of $G$ can be given as words in the generators. In the case of nilpotent groups, they can be represented more efficiently with a Malcev basis with binary Malcev coordinates.
The relevant decision problems for us are the knapsack problem - given $g,x_1,..,x_n\in G$ is $g\in x_1^*x_2^*\cdots x_n^*$? The submonoid membership problem - given $g\in G$ and a finite $S\subset G$, is $g\in S^*$? And a problem that generalizes both, the membership problem in products of submonoids - given finite subsets $S_1,..,S_n\subset G$ and $g\in G$, does $g\in S_1^*S_2^*...S_n^*$?

Adopting a similar notion from groups, we call a f.g.\ monoid $M$ boundedly generated if $M=x_1^*\cdots x_n^*$ for some finite sequence $x_1,...,x_n\in M$. For a nilpotent group $G$, denote by $h(G)$ the Hirsch length of $G$. We prove that if $G$ is nilpotent and $h([G,G])=1$ (such as the Heisenberg groups $H_{2n+1}(\Z)$), then every f.g.\ submonoid of $G$ is boundedly generated, with a computable generating sequence. This allows us to decide membership in products of submonoids of $G$, which allows us to decide membership in submonoids of nilpotent groups $G$ with $h([G,G])\le2$ by the following reduction:
\begin{Theorem}
\label{thm:dim_gain}
\cite[Theorem A]{bodart2024membership}
Let $G$ be a f.g.\ nilpotent group. The (uniform) submonoid membership problem can be reduced to uniformly deciding membership in products of submonoids in subgroups $H\le G$ that satisfy $[G,G]\subseteq H$  and $h([H,H])<h([G,G])$.
\end{Theorem}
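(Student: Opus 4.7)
The plan is to pass to the abelianization and decompose the submonoid membership problem into finitely many ``product of submonoids'' problems that can be phrased inside proper subgroups of $G$ containing $[G,G]$.

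My first step would be to let $\pi\colon G\to G^{\mathrm{ab}}=G/[G,G]$ be the abelianization. For $g\in S^*$ to hold, any witnessing Parikh vector $(c_s)_{s\in S}\in\N^S$ must satisfy $\sum_{s\in S}c_s\pi(s)=\pi(g)$, and the set of such non-negative integer vectors is a finitely generated affine sub-monoid of $\N^S$ which is effectively computable from $S$ and $g$ (for instance via Hilbert bases for the kernel of the induced map $\Z^S\to G^{\mathrm{ab}}$). A standard decomposition of f.g.\ affine sub-monoids of $\N^S$ then splits this set into finitely many effectively describable pieces, inside each of which some coordinates are bounded above by a computable constant while the others range freely over an affine sub-lattice.

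Next, for each piece I would fix a generator $s\in S$ whose exponent is bounded, enumerate its finitely many possible multiplicities and positions inside the word, and move all occurrences of $s$ to prescribed positions. The latter operation introduces commutator corrections $[s,\cdot]\in[G,G]$ whose total value is determined (bilinearly) by the Parikh vector of the rest of the word. What remains after this enumeration is a product-of-submonoids membership question whose factors are submonoids of $H=\sbgrp{S\setminus\{s\},[G,G]}$, a subgroup of $G$ that contains $[G,G]$ and accommodates the residual commutator correction.

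The main obstacle is ensuring that the chosen pivot $s$ makes $h([H,H])$ strictly smaller than $h([G,G])$. Dropping a single generator need not achieve this in general: one has to select $s$ compatibly with the structure of the commutator form $G^{\mathrm{ab}}\wedge G^{\mathrm{ab}}\to [G,G]/\gamma_3(G)$, so that the column corresponding to $s$ genuinely contributes a rationally new direction to the image lattice, and in awkward cases where no single generator works at the top layer one must iterate the argument down successive quotients $\gamma_k(G)/\gamma_{k+1}(G)$ of the lower central series. Combining this pivot-selection with the finiteness of the piecewise decomposition then yields the claimed reduction.
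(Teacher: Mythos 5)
First, a point of calibration: the paper does not prove this statement at all --- it is quoted from Bodart's Theorem~A, and the only substantive hint the paper gives about its proof is that the reduction ``is based on'' the saturation result: if $M$ is a submonoid of a f.g.\ nilpotent group and $M[G,G]$ is a finite-index subgroup of $G$, then $M$ is itself a finite-index subgroup. Your proposal never uses (or rediscovers) this ingredient, and that is a genuine gap rather than a stylistic difference, because it is exactly what handles the terminal case of the recursion that your scheme cannot reach.

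Concretely, your plan hinges on always being able to find a generator $s$ whose exponent is bounded in every factorization of $g$. When $\pi(S)$ generates a subgroup of $G^{\mathrm{ab}}$ as a monoid (e.g.\ $S$ closed under inversion modulo $[G,G]$), no Parikh coordinate is bounded, and the solution set of the inhomogeneous linear system over $\N^S$ need not split into ``bounded coordinates plus a freely ranging sublattice'' in the first place (already $\{(t,t):t\in\N\}\subset\N^2$ has neither a bounded nor a free coordinate). This group-like case is unavoidable and is resolved by saturation, not by another pivot: there $M[G,G]$ has finite index in $\sbgrp{M}$, hence $M$ is a finite-index subgroup and one falls back on subgroup membership. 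The correct dichotomy is therefore: either $\pi(M)$ is a group (apply saturation), or there is a nontrivial homomorphism $\phi:\sbgrp{\pi(M)}\to\Z$ non-negative on $\pi(M)$; then $\phi(\pi(g))$ bounds the number of factors drawn from $\{s:\phi(\pi(s))>0\}$, these finitely many occurrences can be enumerated and conjugated to one side, and the residue is a product of submonoids inside the normal subgroup $H=\pi^{-1}(\ker\phi)\supseteq[G,G]$ --- not inside $\sbgrp{S\setminus\{s\}}[G,G]$ for a single dropped generator. Your own worry that $h([H,H])$ need not drop is well founded, but it is not repaired by a cleverer pivot: in $G=H_3(\Z)\times\Z=\sbgrp{a,b,d}$ with $S=\{a^{\pm1},b^{\pm1},d\}$, the only bounded-exponent generator is $d$, yet removing it leaves $H=H_3(\Z)$ with $h([H,H])=h([G,G])=1$. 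What actually makes the Hirsch length drop is iterating the cut-or-saturate dichotomy inside $H$ until either saturation applies or the commutator subgroup genuinely shrinks; the descent along $\gamma_k(G)/\gamma_{k+1}(G)$ that you propose does not engage with this mechanism.
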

Although the stated reduction in \cite{bodart2024membership} is to the rational subset membership problem, the proof is actually a reduction to the more specific problem of membership in products of submonoids. The reduction is based on the following result: If $M$ is a submonoid of a f.g.\ nilpotent group and $M[G,G]$ is a finite index subgroup of $G$, then $M$ is a finite index subgroup of $G$  \cite{shafrir2024saturation,BodartOrdering}. 

\subsection{Related Results}
\cite[Theorem 6.8]{konig2016knapsack} showed that the knapsack problem is decidable in the groups $H_3(\Z){\times}\Z^e$. We generalize this result to every nilpotent group with $h([G,G])=1$. \cite{colcombet2019reachability} showed that submonoid membership is decidable in Heisenberg groups. In a recent breakthrough, \cite{bodart2024membership} showed that rational subset membership is decidable in $H_3(\Z)$, and conjectured that this holds for all Heisenberg groups. Bodart implicitly proves that f.g.\ submonoids of $H_3(\Z)$ are boundedly generated. On the negative side, submonoid membership is undecidable in some nilpotent group of class $2$ \cite{roman2023undecidability}. Our main result is that f.g.\ submonoids of nilpotent groups with $h([G,G])=1$ are boundedly generated, allowing us to express products of submonoids as knapsacks, and use that to decide membership in this class of rational subsets. This is enough for deciding the submonoid membership problem in nilpotent groups $G$ with $h([G,G])\le 2$  by \Cref{thm:dim_gain}.

\section{Bounded Generation}
If $A$ is a subset of an Abelian group, we define $nA=\underbrace{A+A+\cdots+A}_n$. 

We show that if $A\subset\Z$ is a finite set with bounded gaps, then every sum of elements of $A$, allowing for repetitions, can be expressed as such a sum where most summands are $\min(A)$ and $\max(A)$. Crucially, the maximum number of exceptional summands from the middle of $A$ depends neither on $A$ nor on the number of summands, but only on the maximal gap.
\begin{Lemma}
\label{lem:no_torsion}
    Let $a_1<a_2<...<a_l$ be a sequence in $\Z$ such that $a_{i+1}-a_i\le b$ for $1\le i< l$.
    Set $A=\{a_1,a_2,...,a_l\}$.  Then, for any $n>2b^2$, $nA=(n-2b^2)\{a_1,a_l\}+2b^2A$
\end{Lemma}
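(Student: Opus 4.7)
The plan is to translate so that $a_1=0$ (the claim is translation-invariant), and set $D:=a_l$. We must show that every $s\in nA$ can be written as $s=qD+z_1+\cdots+z_{2b^2}$ with $q\in[0,n-2b^2]$ and $z_j\in A$. Writing $r^*:=s\bmod D\in[0,D)$, the valid values of $\sum_j z_j$ are exactly the numbers $r^*+jD$ for $j$ in an interval of length $\min(2b^2,\,q^*,\,n-q^*)$ around $q^*:=\lfloor s/D\rfloor$; hence it suffices to show that at least one such $r^*+jD$ lies in $2b^2A$.

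I would realize $r^*+jD$ using $j'$ copies of $a_l=D$, a collection of $N$ ``middle'' elements $a_{k_1},\ldots,a_{k_N}\in\{a_2,\ldots,a_{l-1}\}$ summing to $r^*+(j-j')D$, and $2b^2-j'-N$ copies of $a_1=0$.  This reduces the lemma to a modular covering statement: every $r^*\in\langle\bar A\rangle\cap[0,D)$ (with $\bar A:=A\bmod D$) can be written as $\sum_{i=1}^N a_{k_i}=r^*+JD$ with each $a_{k_i}\in\{a_2,\ldots,a_{l-1}\}$, $J\ge 0$, and both $N$ and $J$ controlled by $2b^2$.  The key structural ingredients available in $\Z/D\Z$ are two ``small-step'' elements: $\bar a_2$ is a forward step of size $a_2\le b$, and $\bar a_{l-1}$ is equivalent to a backward step of size $D-a_{l-1}\le b$; moreover, the gap bound guarantees that the remaining intermediate $\bar a_k$'s are ``dense'' between these endpoints.

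When $\langle\bar a_2,\bar a_{l-1}\rangle=\langle\bar A\rangle$, combining the two opposite small steps sweeps the subgroup in $O(b)$ moves in each direction, giving an $O(b^2)$ total once the wrap-around contribution to $J$ is included.  The hardest case is when these two primitive moves generate only a proper subgroup of $\langle\bar A\rangle$; then further intermediate $\bar a_k$'s must be invoked, and the delicate accounting that simultaneously keeps both $N$ and $J$ bounded by $2b^2$ is the main obstacle.  I expect the cleanest route is via pigeonhole on partial sums of consecutive gaps $a_{k+1}-a_k$, or by induction along the subgroup chain $\langle\bar a_2\rangle\le\langle\bar a_2,\bar a_3\rangle\le\cdots\le\langle\bar A\rangle$, adding generators one at a time and bounding the incremental cost by $O(b)$ per added generator.
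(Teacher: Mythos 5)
Your route is genuinely different from the paper's --- you try to rebuild a representation of $s$ from scratch via modular arithmetic in $\Z/D\Z$, whereas the paper modifies the \emph{given} representation $s=\sum_i s_i$ by local exchanges --- but as written it has a real gap, which you yourself flag: the case $\langle\bar a_2,\bar a_{l-1}\rangle\subsetneq\langle\bar A\rangle$ is left entirely open (``I expect the cleanest route is\dots''), and that is where the content of the lemma lives. Moreover, even your ``easy case'' does not work as stated: with $O(b)$ forward steps of size $\alpha=a_2\le b$ and $O(b)$ backward steps of size $\beta=a_l-a_{l-1}\le b$ you reach only $O(b^2)$ residues, while $|\langle\bar a_2,\bar a_{l-1}\rangle|=D/\gcd(\alpha,\beta,D)$ can be as large as $D=a_l-a_1\le (l-1)b$, which is \emph{not} bounded by any function of $b$ alone. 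To repair this you would first have to anchor within $b$ of $r^*$ using one middle element (the $b$-density of $A$ allows this) and then correct a discrepancy $|\delta|\le b$ by small steps; but $\delta$ need not lie in $\gcd(\alpha,\beta)\Z$, which throws you straight back into the unresolved hard case. The constant also degrades to an unspecified $O(b^2)$ rather than the stated $2b^2$.

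The deeper structural problem is that your reduction discards the hypothesis $s\in nA$. You reduce to a covering statement for all of $\langle\bar A\rangle\cap[0,D)$, but determining which residues are achievable as sums of few middle elements, with controlled integer lift $J$ and controlled length $N$, is a Frobenius-type question at least as hard as the lemma itself; and the admissible window for $j$ collapses when $q^*$ is near $0$ or $n$, so ``$J\ge0$ and controlled'' is not even the right condition at the boundary. The paper avoids all of this: starting from a nondecreasing $s\in A^n$, it applies the pigeonhole principle to the map $i\mapsto$ (distance from $s_i$ to the next element of $A$) on the first and last $b^2$ ``middle'' indices, extracts values $b_-,b_+\le b$ each attained at least $b$ times, and bumps $b_+$ entries down by $b_-$ while bumping $b_-$ entries up by $b_+$, preserving the sum and membership in $A^n$; iterating leaves at most $2b^2$ middle entries. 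That argument never needs to understand $\langle\bar A\rangle$ at all, and I would recommend adopting it rather than pushing the modular covering through.
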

\begin{proof}
    Assume $t\in nA$, and let $s\in A^n$ be nondecreasing sequence such that $\sum s_i=t$. We describe an algorithm to transform $s$ to a sequence concetrated at the extremes $\{a_1,a_l\}$ while conserving the sum $t$. We use the notation $[i,j]$ for intervals of integers. While the interval $[i_{-},i_{+}]=\{i\mid a_1<s_i<a_l\}$ has size $>2b^2$, do the following: Define  $g_-:[i_{-},i_{-}+b^2]\rightarrow\{1,..,b\}$ by $g_-(i)=\min\{d\ge1\mid s_i-d\in A\}$, and  $g_+:[i_{+}-b^2,i_{+}]\rightarrow\{1,...,b\}$ by  $g_+(i)=\min\{d\ge1\mid s_i+d\in A\}$.  Since $i_{+}-i_{-}>2b^2$, the domains of $g_-,g_+$ are disjoint. By the pigeonhole principle, there is some $b_-$ with $|g_-^{-1}(b_-)|\ge b$ and $b_+$ with  $|g_+^{-1}(b_+)|\ge b$. Take the smallest $b_+$ indices $i\in g_-^{-1}(b_-)$ and replace $s_i\leftarrow s_i-b_-$, and also take the largest $b_-$ indices $i\in g_+^{-1}(b_+)$ and replace $s_i\leftarrow s_i+b_+$. After the change we still have $s\in A^n$ by definition of $g_\pm$. Since we subtracted $b_+b_-$ and then added $b_-b_+$, $\sum s_i$ is conserved. The resulting sequence is still nondecreasing: since we always bump to the nearest value, monotonicity can only be violated if for some $i$ with $s_i=s_{i+1}$ and bump up $s_i$ but not $s_{i+1}$, or bump down $s_{i+1}$ but not $s_i$. Since we always bump down the smallest relevant indices and bump up the largest indices, this problem will never occur. For example, if $s=(2,2,3,5,5,8,8,8,10,10,10,14...)$, $A=\{2,3,5,8,10...\}$, $b_-=2$, and $b_+=3$, then after the changes we get \space$(2,2,3,\boldsymbol{3},\boldsymbol{3},8,8,8,\boldsymbol{8},10,10,14...)$, reducing the sum by $6$ (this is the lower half of the transformation only). For a fixed coordinate $i$, we observe that $s_i$ either only decrease or only increase during the run, and can therefore change at most $l$ times. Therefore the algorithm halts after $\le nl$ steps, giving a sequence $s\in S^n$ with  $\sum_is_i=t$ and $|\{i\mid a_1<s_i<a_l\}|\le 2b^2$,  demonstrating $t\in (n-2b^2)\{a_1,a_l\}+2b^2A$.
\end{proof}    \begin{Remark}
      Each iteration gives a sequence which is smaller in the lexicographical order and has higher variance. Instead of the constructive proof of \Cref{lem:no_torsion}, we could start with an $s\in A^n$ with $\sum_is_i=t$ which is either lexicographical minimal or has maximal variance and prove that  $|\{i\mid a_1<s_i<a_l\}|\le 2b^2$. 
\end{Remark}
 Although we don't need it for bounded generation, it is interesting to give the dual of \Cref{lem:no_torsion}, where the sum is concentrated at 2 consecutive values instead of the extremes:
\begin{Lemma}
    \label{lem:dual_no_torsion}
    Let $a_1<a_2<...<a_l$ be a sequence in $\Z$ such that $a_{i+1}-a_i\le b$ for $1\le i< l$.
    Set $A=\{a_1,a_2,...,a_l\}$.  Then, for any $n>2b^2$ we have \[nA=\bigcup_{1\le k<l}(n-2b^2)\{a_k,a_{k+1}\}+2b^2A\]
    
\end{Lemma}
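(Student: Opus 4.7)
The plan is to adapt the algorithm of \Cref{lem:no_torsion}, redirecting it from the extremes $\{a_1,a_l\}$ toward a suitably chosen consecutive pair $\{a_k,a_{k+1}\}$. Given $t\in nA$, first pick $k$ with $na_k\le t\le na_{k+1}$; such $k$ exists because $t\in[na_1, na_l]$. Start with any nondecreasing $s\in A^n$ summing to $t$, and let $L=\{i\mid s_i<a_k\}$ and $R=\{i\mid s_i>a_{k+1}\}$; by monotonicity these are (possibly empty) initial and terminal intervals of indices.

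The iteration mirrors \Cref{lem:no_torsion} with the bump direction reversed: while $|L|>b^2$ and $|R|>b^2$, define $g_+\colon L\to\{1,\ldots,b\}$ by $g_+(i)=\min\{d\ge 1\mid s_i+d\in A\}$ and $g_-\colon R\to\{1,\ldots,b\}$ by $g_-(i)=\min\{d\ge 1\mid s_i-d\in A\}$. Pigeonhole applied to the rightmost $b^2+1$ indices of $L$ and the leftmost $b^2+1$ indices of $R$ yields $b_+,b_-\in\{1,\ldots,b\}$ with at least $b$ preimages each; bump the $b_-$ rightmost chosen $L$-indices up by $b_+$, and the $b_+$ leftmost chosen $R$-indices down by $b_-$. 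The sum is conserved ($b_-b_+-b_+b_-=0$), nondecreasingness is preserved by the same boundary argument as in \Cref{lem:no_torsion} (bump up at the top of $L$ and down at the bottom of $R$), and every coordinate evolves monotonically during the entire run, so the algorithm terminates.

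At termination, only one of $|L|,|R|$ is \emph{a priori} guaranteed to be at most $b^2$; the main obstacle is to show that both are, so that $|L|+|R|\le 2b^2$. The two-sided sum estimates
\[na_k-|L|(a_k-a_1)+|R|(a_{k+2}-a_k)\le t\le na_{k+1}-|L|(a_{k+1}-a_{k-1})+|R|(a_l-a_{k+1}),\]
combined with $t\in[na_k,na_{k+1}]$, provide some control but are not in general sharp enough in isolation. I would therefore let the target index $k$ drift adaptively during the run: decrement $k$ whenever $|L|$ dominates (absorbing the current $a_k$-block into the new middle set) and increment when $|R|$ dominates. Combining the bump-monovariant with a shift-monovariant should force $|L|+|R|\le 2b^2$ at termination for the final $k$, giving $t\in(n-2b^2)\{a_k,a_{k+1}\}+2b^2A$. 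Alternatively, one could follow the Remark after \Cref{lem:no_torsion} and select a nondecreasing $s$ extremizing an appropriate concentration functional (e.g.\ minimizing the number of distinct values with a variance tiebreak), and argue that any failure of $|L_k|+|R_k|\le 2b^2$ for every $k$ admits a sum-preserving local move that strictly decreases the functional, yielding a contradiction.
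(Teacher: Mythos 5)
There is a genuine gap, and you have correctly identified where it is: everything up to ``the main obstacle'' is fine, but the obstacle itself is never overcome. Fixing $k$ in advance by $na_k\le t\le na_{k+1}$ and running the bump loop while \emph{both} $|L|>b^2$ and $|R|>b^2$ only guarantees that \emph{one} tail is short at termination, and your own two-sided estimate shows why the other tail can remain of size $\Theta(n)$ (e.g.\ when $a_{k+1}-a_k$ is comparable to $a_{k+2}-a_k$, the inequality $|R|(a_{k+2}-a_k)\le n(a_{k+1}-a_k)+|L|(a_k-a_1)$ permits $|R|$ as large as a constant fraction of $n$). The proposed repair --- letting $k$ drift and invoking an unspecified ``shift-monovariant'' --- is exactly the part that needs a proof, and ``should force'' is not one; the alternative extremal-functional route is likewise only named, not carried out. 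So as written the argument establishes $t\in(n-b^2-|R|)\{a_k,a_{k+1}\}+(b^2+|R|)A$ with no control on $|R|$, which is not the statement.

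The paper's proof avoids this by choosing the stopping condition, rather than the pair, in advance: it monitors the \emph{values at the fixed positions} $b^2$ and $n-b^2$ of the nondecreasing sequence, and iterates as long as $s_{b^2}$ and $s_{n-b^2}$ are neither equal nor consecutive in $A$. In that case some $a_k$ lies strictly between them, so there are automatically at least $b^2$ indices below $a_k$ and at least $b^2$ above it, the pigeonhole move toward $a_k$ is always available, and the pivot $a_k$ is re-chosen at every iteration (this is the rigorous version of your ``drift''). Termination then \emph{directly} yields the conclusion: once $s_{b^2}$ and $s_{n-b^2}$ are equal or adjacent in $A$, all positions from $b^2$ to $n-b^2$ lie in some $\{a_k,a_{k+1}\}$, so at most $2b^2$ entries are exceptional. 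If you replace your fixed-$k$ loop and its $L/R$ bookkeeping with this position-based stopping rule, your bump mechanics (pigeonhole on gap sizes, sum conservation $b_-b_+-b_+b_-=0$, monotonicity preserved by bumping the largest indices up and the smallest down) go through essentially unchanged.
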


\begin{proof}
We sketch the proof. Assume $t\in nA$, and  $s\in A^n$  nondecreasing such that $\sum s_i=t$. As long as $s_{b^2}$ and $s_{n-b^2}$ are neither equal nor consecutive in $A$, do the following: let $k$ be such that $s_{b^2}<a_k<s_{n-b^2}$, and let $[i_-,i_+]=\{i\mid s_i=a_k\}$. Similarly to the above proof, we can bump up some values of $s_1,...,s_{i_{-}-1}$ and bump down some values of $s_{i_{+}+1},..,s_n$ while preserving the sum. Monotonicity is preserved as long as we bump up the largest indices with a given value and bump down the lowest indices with a given value. There is no monotonicity problem on $i_\pm$ since $s$ strictly increase from $i_--1$ to $i_-$ and from $i_+$ to $i_++1$. When the algorithm halts, either $s_{b^2}=s_{n-b^2}=a_k$ or $s_{b^2}=a_k, s_{n-b^2}=a_{k+1}$ for some $k$. Therefore only the first and last $b^2$ elements are outside $\{a_k,a_{k+1}\}$.
\end{proof}

In order to prove bounded generation not just for groups with $[G,G]\simeq\Z$, but also for groups where $[G,G]$ has torsion, we need a slight generalization of the above result.
\begin{Lemma}
\label{lem:torsion}
    Let $G_0$ be a finite Abelian of size $e$, and $A\subset\Z{\times} G_0$ a finite set whose projection on $\Z$ has gaps bounded by $b$. Then, there exists $A_0\subseteq A$ with $|A_0|\le 2e$ such that for any $n>2b^2e$, we have $nA=(n-2b^2e)A_0+2b^2eA$
\end{Lemma}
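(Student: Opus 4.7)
Let $a_1=\min\pi_{\Z}(A)$, $a_l=\max\pi_{\Z}(A)$, and set $A_0:=A\cap(\{a_1,a_l\}\times G_0)$. Each of the two fibers $\{a_1\}\times G_0$ and $\{a_l\}\times G_0$ contains at most $e$ elements of $A$, so $|A_0|\le 2e$. The inclusion $(n-2b^2e)A_0+2b^2eA\subseteq nA$ is immediate, so the content is the reverse inclusion.

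For $t\in nA$, take a representative $s=(s_1,\dots,s_n)\in A^n$ with $\sum s_i=t$, ordered so that $\pi_{\Z}(s_i)$ is nondecreasing. I would mimic the algorithmic proof of Lemma~\ref{lem:no_torsion}, running a loop that modifies $s$ (preserving $\sum s_i=t$ in both coordinates) until at most $2b^2e$ coordinates lie in the middle range $\{i:a_1<\pi_{\Z}(s_i)<a_l\}$; the remaining $\ge n-2b^2e$ coordinates will then lie in $A_0$, giving the desired decomposition.

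Each iteration examines the first $b^2e$ and last $b^2e$ middle indices. For $i$ in the first block, define $d(i)\in\{1,\dots,b\}$ as the minimum positive $\Z$-gap with $\pi_{\Z}(s_i)-d(i)\in\pi_{\Z}(A)$, and pick any $g(i)\in G_0$ with $(\pi_{\Z}(s_i)-d(i),\,\pi_{G_0}(s_i)+g(i))\in A$. Pigeonholing $(d(i),g(i))$ into $\{1,\dots,b\}\times G_0$ (of cardinality $be$) yields a pair $(d_-,g_-)$ realized as a valid bump by many low indices, and the symmetric argument on the right block gives $(d_+,g_+)$. Exactly as in Lemma~\ref{lem:no_torsion}, bumping $d_+$ low indices by $(-d_-,g_-)$ and $d_-$ high indices by $(d_+,g_+)$ preserves the $\Z$-sum; the $G_0$-sum drifts by $h:=d_+g_-+d_-g_+\in G_0$. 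Since $G_0$ has exponent dividing $e$, repeating the whole move $k=\mathrm{ord}_{G_0}(h)\le e$ times annihilates the drift. Termination follows from the same monotonicity argument as in Lemma~\ref{lem:no_torsion}: each $\pi_{\Z}(s_i)$ only ever moves in one direction, so changes per coordinate are bounded by $|\pi_{\Z}(A)|$.

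\textbf{Main obstacle.} The delicate step is the pigeonhole bookkeeping: repeating the basic move $k\le e$ times needs up to $\sim eb$ indices sharing a common bump on each side, not just $\sim b$ as in the torsion-free case. Showing that the budget $b^2e$ per side actually suffices — rather than the naive $b^2e^2$ — requires a more refined count, likely exploiting that $A_0$ itself carries up to $2e$ distinct $G_0$-lifts, so that distinct iterations can use different $(d_\pm,g_\pm)$ whose cumulative $G_0$-drift is arranged to cancel globally rather than locally. An alternative non-algorithmic route, paralleling the Remark after Lemma~\ref{lem:no_torsion}, is to pick $s\in A^n$ with $\sum s_i=t$ maximizing $\sum_i\pi_{\Z}(s_i)^2$ and derive a contradiction from the existence of a middle set of size $>2b^2e$ by producing a sum-preserving, variance-increasing swap.
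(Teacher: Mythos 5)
Your strategy coincides with the paper's: take $A_0=A\cap\pi^{-1}(\{\min\pi(A),\max\pi(A)\})$ and rerun the bumping algorithm of \Cref{lem:no_torsion} on windows of about $b^2e$ indices, pigeonholing pairs in $\{1,\dots,b\}\times G_0$. The only real divergence is how the $G_0$-drift is killed: you repeat a basic move $\mathrm{ord}_{G_0}(h)\le e$ times, whereas the paper bumps $b_\mp\cdot e$ indices in a single step, so that the $G_0$-component of the total change is $e$ times an element of $G_0$ and hence already zero. The paper's variant is tidier (no running drift to track, no need to reuse the same pair $(d_\pm,g_\pm)$ across repetitions), but both variants consume the same resource: a pigeonhole class of size roughly $be$ on each side.

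The ``main obstacle'' you flag is therefore the crux, and you are right that it is not a formality: a window of $b^2e+1$ indices mapped into the set $\{1,\dots,b\}\times G_0$ of size $be$ only guarantees a class of size $b+1$, not the $b\cdot e$ occurrences that the argument needs (and that the paper's proof simply asserts); one can arrange examples, already with $b=1$ and $|G_0|=3$, where the $G_0$-lifts of consecutive integers have successive differences spread over $G_0$ and no class in such a window reaches size $be$. So by this method the window must be enlarged to about $b^2e^2$ indices per side, yielding the lemma with constant $2b^2e^2$ in place of $2b^2e$ --- which is still entirely sufficient for \Cref{thm:bnddgen} and the decidability applications, since only the existence of a computable bound is used downstream. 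As written, your proposal does not close this step, so it is not yet a complete proof of the statement: either carry out the count with the larger window and accept the weaker constant, or supply the finer argument you allude to. Note also that your non-algorithmic alternative (maximizing $\sum_i\pi(s_i)^2$) runs into exactly the same counting issue when producing the sum-preserving, variance-increasing swap.
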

\begin{proof}
 This is similar to the proof of \Cref{lem:no_torsion}, but now we need more occurrences of each gap to null the torsion part. Let $\pi:\Z{\times} G_0\rightarrow\Z$ be the projection to the first coordinate. Let $B_0=\{\min(\pi(A)),\max(\pi(A))\}$ and $A_0=A\cap\pi^{-1}(B_0)$. We note that $|A_0|\le 2e$ since $|\pi(A_0)|=2$. Let $s\in A^n$ be with $\pi(s)$ nondecreasing.
As long as  $[i_{-},i_{+}]\doteqdot\{i\mid s_i\notin A_0\}$ has more than $2b^2e$ elements, do the following: Define $g_-:[i_{-},i_{-}+b^2e]\rightarrow\{1,...,b\}{\times}G_0$ and $g_+:[i_{+}-b^2e,i_{+}]\rightarrow\{1,...,b\}{\times}G_0$ , such that $s_i\pm g_\pm(i)\in A$ and $\pi(g_\pm(i))$ is minimal. There are $(b_\pm,t_\pm)\in\{1,..,b\}{\times} G_0$ occurring in the image of $g_\pm$ at least $b\cdot e$ times. Take the smallest (resp. largest) $b_\mp\cdot e$ indices $i\in g_\pm^{-1}(b_\pm,t_\pm)$ and replace $s_i\leftarrow s_i\pm (b_\pm,t_\pm)$. One of these operations adds $(b_-b_+e,0)\in\Z{\times}G_0$ to $\sum_i s_i$, and the other subtracts it, so the sum is preserved. When the algorithm halts, we have $|\{i\mid s_i\notin A_0\}|\le 2b^2e$ as needed.
\end{proof}

We now show how all this is relevant to nilpotent groups:
\begin{Proposition}
\label{Prop:reorder}
    Let $G$ be nilpotent of class $2$, $w\in G^l$ a word in $G$ of length $l$, and $x\in G$.  Let $A=\{[x,\val(w_{\le i})]|0\le i\le l\}\subseteq[G,G]$ where $w_{\le i}$ is the $i$'th prefix of $w$.
    For every word $u$ obtained from $w$ by inserting $n$ occurrences of $x$ we have $\val(u)\in \val(x^nw)A^n$.
    Moreover, if  $\val(u)\in\val(x^nw)B^n$ for some subset $B\subset A$, then we can rearrange the occurrences of $x$ in $u$ to $\le|B|$ blocks of $x$ (i.e. powers $x^e$) without changing the value.
\end{Proposition}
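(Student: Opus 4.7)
The plan is to use the standard commutator calculus in a class-$2$ nilpotent group, where $[G,G]$ is central and the commutator is therefore bilinear in each argument. Both halves of the statement will follow from a single explicit formula describing what happens when each inserted $x$ is transported to the front of the word.

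I would first fix a representation of $u$ as $w$ with $n$ extra $x$'s inserted at positions $0 \le j_1 \le j_2 \le \cdots \le j_n \le l$, so that the $k$-th inserted $x$ sits immediately after the prefix $w_{\le j_k}$. To rewrite $u$ as $x^n w$ times a central element, I move each inserted $x$ leftwards, swap by swap, toward the front. A single swap past a $w$-letter $g$ introduces a central commutator of $x$ and $g$, while a swap past another inserted copy of $x$ contributes trivially since $[x,x]=1$. By bilinearity of the commutator on $[G,G]$, transporting the $k$-th $x$ past the entire prefix $w_{\le j_k}$ accumulates exactly the factor $[x, \val(w_{\le j_k})] \in A$. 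Collecting all such central factors on the right yields the formula $\val(u) = \val(x^n w) \cdot \prod_{k=1}^n [x, \val(w_{\le j_k})]$, and the product lies in $A^n$, proving the first claim.

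For the moreover part, suppose $\val(u) = \val(x^n w) \cdot \prod_{k=1}^n b_k$ with each $b_k \in B$. For every $b \in B$ choose an index $m(b) \in \{0, \ldots, l\}$ with $b = [x, \val(w_{\le m(b)})]$, and set $n_b = |\{k : b_k = b\}|$, so $\sum_{b \in B} n_b = n$. Distinct $b$'s force distinct $m(b)$'s. I then define $u'$ by inserting $n_b$ consecutive copies of $x$ immediately after position $m(b)$ of $w$, for each $b \in B$. Then $u'$ is obtained from $w$ by inserting $n$ copies of $x$ arranged in at most $|B|$ contiguous blocks, and the formula from the first half, applied to $u'$, gives $\val(u') = \val(x^n w) \cdot \prod_{b \in B} b^{n_b} = \val(x^n w) \cdot \prod_{k=1}^n b_k = \val(u)$, as required.

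I do not expect a serious obstacle. The only real care needed is keeping track of commutator signs under the chosen convention and justifying the bilinearity identity $[x, g_1 g_2 \cdots g_j] = \prod_i [x, g_i]$, which is valid precisely because $[G,G]$ is central when $G$ has class $2$; and checking that the representatives $m(b)$ can be chosen distinctly, which is automatic from the definition of $A$.
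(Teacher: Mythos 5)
Your proof is correct and takes essentially the same route as the paper, whose entire argument is the observation that moving an $x$ from the front of $w$ to the $i$-th position changes the value by the central element $[x,\val(w_{\le i})]$, with the rest left as ``straightforward''. You fill in exactly those details (the bilinearity $[x,g_1\cdots g_j]=\prod_i[x,g_i]$ in class $2$, and the explicit block construction with distinct representatives $m(b)$ for the converse direction), modulo the commutator-sign convention you already flag, which is harmless since $A$ may be replaced by $A^{-1}$ throughout without affecting the statement's later use.
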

The word $x^nw$ has the role of a reference point. The proof is straightforward, noticing that moving $x$ from the beginning of $w$ to the $i$'th position changes the value by $[x,\val(w_{\le i})]$. We can now give our main result:

\begin{Theorem}
\label{thm:bnddgen}
    Let $G$ be nilpotent with $h([G,G])=1$. Then, every f.g.\ submonoid of $G$ is computably boundedly generated.
\end{Theorem}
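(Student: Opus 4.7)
My approach is to reduce to nilpotency class $2$, then combine the obvious bounded generation of $\pi(M)$ in the abelianization $G/[G,G]$ with \Cref{Prop:reorder} and \Cref{lem:torsion} to control the commutator corrections in each fibre.

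First I would reduce to the case when $G$ has class $2$. Let $T\le[G,G]$ be the torsion subgroup, which is finite because $[G,G]$ is a f.g.\ nilpotent group of Hirsch length~$1$. Then $[G,G]/T\simeq\Z$ is a normal subgroup of the nilpotent group $G/T$, and must be central: any non-trivial conjugation action would be by $\pm 1$, and the iterated commutators $[g,c]=c^{-2}$, $[g,[g,c]]=c^{4}$, and so on never vanish, contradicting nilpotency. So $G/T$ has class $2$. The finite kernel $T$ only costs finitely many extra cyclic monoid factors when lifting a bounded generating sequence for $MT/T$ back to $M$.

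In the class-$2$ case, fix monoid generators $S=\{s_1,\ldots,s_k\}$ of $M$ and consider an arbitrary word $u\in S^*$ of abelian content $(n_1,\ldots,n_k)$. Applying \Cref{Prop:reorder} with $x=s_1$ to pull all $s_1$'s to the left, then $x=s_2$, and so on, we obtain $\val(u)=s_1^{n_1}\cdots s_k^{n_k}\cdot c$ with $c\in\sum_j n_j A_j$ for computable finite sets $A_j\subseteq[G,G]$. Since $[G,G]$ is virtually $\Z$, we may embed it in $\Z\times T$; each $A_j$ then has bounded projection-gaps to $\Z$, so \Cref{lem:torsion} yields $n_j A_j=(n_j-c')(A_j)_0+c' A_j$ for $n_j$ past an effective threshold, with $|(A_j)_0|\le 2|T|$. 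Each element of $(A_j)_0$ corresponds, via the ``moreover'' clause of \Cref{Prop:reorder}, to a specific rearrangement of the $s_j$'s inside $u$, i.e.\ to an explicit word in $S^*$. Listing these words, words realising the bounded ``middle'' corrections from $c' A_j$, and the finitely many small-$n_j$ exceptions as individual cyclic factors, we assemble the required bounded generating sequence $x_1,\ldots,x_N\in M$.

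The main obstacle I anticipate is the bookkeeping in the middle step: the set $A_j$ depends on the suffix that remains after $s_1,\ldots,s_{j-1}$ have been pulled left, which is not a single word but ranges over many possibilities. The resolution is that the suffix's value matters only modulo $[G,G]$, and since commutators are central in class~$2$, that residue is pinned down by the suffix's abelian content $(n_{j+1},\ldots,n_k)$, a single grid point. Hence each $A_j$ is a fixed finite set, not a growing union, and the inductive application of \Cref{Prop:reorder} is legitimate. A secondary subtlety, also handled by the ``moreover'' clause, is ensuring that each extremal correction in $(A_j)_0$ is realised by an actual word in $S^*$ rather than by an abstract commutator in $[G,G]\setminus M$.
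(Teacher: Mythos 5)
Your overall architecture --- consolidate each generator into few blocks by combining \Cref{Prop:reorder} with \Cref{lem:torsion} --- is the paper's, but two of your steps have genuine gaps. The first is the reduction to class $2$ by quotienting by the torsion subgroup $T$ of $[G,G]$ and ``lifting back''. If $MT/T=\bar y_1^*\cdots\bar y_N^*$, then an element $m\in M$ only satisfies $m=y_1^{a_1}\cdots y_N^{a_N}t$ for some $t\in T$, and neither $t$ nor the product $y_1^{a_1}\cdots y_N^{a_N}$ need lie in the submonoid $M$; bounded generation does not pass from $MT/T$ back to $M$ for free, and ``finitely many extra cyclic factors'' is precisely the claim that needs proof. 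The paper avoids this step entirely: \Cref{lem:torsion} carries the finite group $G_0$ through the combinatorics (at the cost of the factor $e=|G_0|$ in every bound), which is exactly why that lemma exists alongside \Cref{lem:no_torsion}. Your write-up is also internally inconsistent here, since after killing $T$ you later embed $[G,G]$ into $\Z\times T$ and invoke \Cref{lem:torsion} anyway.

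The second gap is that your ``resolution'' of the anticipated obstacle is false as stated. The set $A_j=\{[s_j,\val(w_{\le i})]\mid i\}$ is indexed by \emph{all prefixes} of the remaining word; in class $2$ the term for prefix $i$ is determined by the abelian content of that prefix, so $A_j$ depends on the entire monotone lattice path traced by the prefix contents, not merely on its endpoint $(n_{j+1},\dots,n_k)$ --- the suffixes $s_2s_3$ and $s_3s_2$ already give different sets. Hence $A_j$ genuinely varies with $u$, which sinks the plan of listing the elements of $(A_j)_0$ and of $c'A_j$ as \emph{fixed} cyclic factors; moreover those elements lie in $[G,G]$ and need not belong to $M$ at all. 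The repair, which is what the paper does, is to note that the argument only needs data uniform over all words: the gap bound $b$ on $\pi(A_j)$ and the cardinality bound $|(A_j)_0|\le 2e$. The ``moreover'' clause of \Cref{Prop:reorder} then converts the chosen decomposition of the correction into a rearrangement of $u$ itself, so every cyclic factor in the final sequence is a power of an original generator and the answer is simply $M=(\bigcup_i x_i^*)^{4ne(b^2+1)}$. You would also need to account for the fact that consolidating $s_j$ into blocks can split previously formed blocks of $s_1,\dots,s_{j-1}$ in two, which is where the extra factor in the paper's induction step comes from.
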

\begin{proof}
    Identify $[G,G]$ with $\Z{\times} G_0$ where $G_0$ is a finite Abelian group of size $e$. Let $x_1,..,x_n\in G$, we need to show that $M=\{x_1,...,x_n\}^*$ is boundedly generated. Let $b$ be such that $[x_i,x_j]\in\{-b,...,b\}{\times} G_0$ for all $i,j$. We claim that $M=(\bigcup x_i^*)^{4ne(b^2+1)}$. Let $w$ be any word in $\{x_1,..,x_n\}$. We prove by induction on $m\le n$ that we can reorder the occurrences of $x_1,...,x_m$ in $w$ without changing its value, such that there are at most $4me(b^2+1)$ blocks of $x_1,..,x_m$.  In the first step,  let $v_i=\val(w_{\le i})$ and $A=\{[x_1,v_i]\mid 0\le i\le|w|\}$ (one may delete $x_1$ from $w$ before defining $A$; the resulting set is the same). Since $[x_1,v_i]-[x_1,v_{i-i}]=[x_1,w_i]\in \{-b,...,b\}{\times} G_0$, $\pi(A)\subset\Z$ has gaps bounded by $b$. By \Cref{lem:torsion} and \Cref{Prop:reorder}, we can reorder $x_1$ so that $x_1$ appears in at most $2e+2eb^2\le 4e(b^2+1)$ blocks, as needed. In the induction step, we do the same thing with $x_k$, but this time every new block can split an existing block into two, therefore the number of blocks may increase by $4e(b^2+1)$. 
\end{proof}
We wish to stress that the map $i\mapsto [x_k,v_i]$ is not monotonic and may have local minima, therefore bumping down an occurence of $x_k$ to the next smallest value may require moving it far in the word $w$. Similarly, the extreme values $A_0$ of $A$, where the big blocks of $x^k$ are inserted, need not correspond to the endpoints of $w$.

\section{Decidability Results}
It is well-known that every f.g.\ nilpotent group has a torsion-free subgroup of finite index. We sketch how to find such a subgroup for $G$ of class 2.
\begin{Proposition}
\label{prop:torsion_free_fi_subgroup}
    Given a finite presentation of a 2-step nilpotent group $G$, we can find generators of a torsion-free finite index subgroup of $G$.
\end{Proposition}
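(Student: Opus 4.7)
The plan is to build $H$ in two stages, addressing the fact that the naive construction $H = \pi^{-1}(L)$ for $L$ a torsion-free complement of $\mathrm{Tor}(G^{\mathrm{ab}})$ in $G^{\mathrm{ab}}$ leaves the entire torsion of $[G,G]$ inside $H$. Instead, I would first produce a finite-index subgroup $H_1 \le G$ whose commutator subgroup is torsion-free, and then shrink $H_1$ further so that the abelianization of the resulting $H$ is torsion-free. Torsion-freeness of $H$ then follows at once: a torsion element of $H$ projects to the trivial element of the torsion-free abelianization, so it lies in $[H_1,H_1]$, which has no torsion.

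For the first stage, I would compute $A := [G,G]$: being central in the class-$2$ group $G$, it is finitely generated as an abelian group by the commutators $[g_i,g_j]$ of the generators of $G$, and Smith normal form yields its torsion subgroup $T$ together with its exponent $e$. Let $H_1 := G^e\cdot A = \pi^{-1}(eG^{\mathrm{ab}})$; this has finite index in $G$ and the explicit finite generating set $\{g_i^e\}\cup\{[g_i,g_j]\}$. Because $A$ is central and the commutator is bilinear in class $2$, every commutator of generators of $H_1$ involving an element of $A$ vanishes, while $[g_i^e,g_j^e]=[g_i,g_j]^{e^2}$; therefore $[H_1,H_1]\subseteq e^2 A$, and since $eT=0$ this $e^2 A$ is torsion-free.

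For the second stage, I would compute a finite presentation of the finitely generated abelian group $H_1/[H_1,H_1]$ (applying Reidemeister--Schreier to the known generators of $H_1$) and use Smith normal form to pick a finite-index torsion-free subgroup $L'\le H_1/[H_1,H_1]$. Defining $H$ to be the preimage of $L'$ in $H_1$ gives a finite-index subgroup of $G$ containing $[H_1,H_1]$, with $H/[H_1,H_1]\cong L'$ torsion-free, and the short argument sketched in the first paragraph then yields torsion-freeness of $H$.

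The main conceptual step is the first one: using centrality of $A$ and bilinearity of the commutator in class $2$ to force $[H_1,H_1]$ into $e^2 A$, thereby absorbing its torsion into a subgroup that is already killed by the exponent $e$. The remaining work (Reidemeister--Schreier for $H_1$ and Smith normal form on the abelian group $H_1/[H_1,H_1]$) is routine for polycyclic groups.
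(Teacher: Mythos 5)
Your proof is correct and rests on the same two pillars as the paper's: in class $2$ the identity $[x^e,y^e]=[x,y]^{e^2}$ (via centrality and bilinearity of the commutator) forces the commutator subgroup of a subgroup built from $e$-th powers into $e^2[G,G]$, which is torsion-free, and a group with torsion-free commutator subgroup and torsion-free abelian quotient is torsion-free. The only real difference is packaging: the paper takes $H=\langle x_1^e,\dots,x_r^e\rangle$ with the $x_i$ lifting a basis of the free part of $G^{\mathrm{ab}}$, so that $H/[H,H]\cong\pi(H)$ is free abelian at once and your second Reidemeister--Schreier/Smith-normal-form stage is not needed.
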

\begin{proof}
    Take the Abelianization map $\pi:G\rightarrow G/[G,G]\simeq \Z^r{\times} A_0$ with $A_0$ finite, and let $x_1,..,x_r\in G$ be such that $\pi(x_1),..,\pi(x_r)$ form a basis for $\Z^r$. Let $e$ be the exponent of the torsion part of $[G,G]$. Then, $H=\sbgrp{x_1^e,..,x_r^e}$ is finite index since $\pi(H)$ is finite index. Since $\pi(x_1^e),..,\pi(x_r^e)$ generate a free Abelian group, $H\cap[G,G]=[H,H]$. We get that $[H,H]\subseteq [G,G]^{e^2}$ is torsion-free, and $H/[H,H]=H/(H\cap[G,G])\simeq\pi(H)\le\Z^r$ is torsion-free, therefore $H$ is torsion-free. 
\end{proof}
The following is a straightforward generalization of \cite[Theorem 6.8]{konig2016knapsack}
\begin{Theorem}
\label{thm:knapsack}
    The Knapsack Problem is decidable uniformly for all nilpotent groups $G$ with $h([G,G])=1$.
\end{Theorem}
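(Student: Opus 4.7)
The plan is to follow the scheme of \cite[Theorem 6.8]{konig2016knapsack} after a preliminary reduction to a structurally cleaner group.

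First I would reduce to the case where $G$ is torsion-free with $[G,G]\cong\Z$. Extending \Cref{prop:torsion_free_fi_subgroup} to arbitrary nilpotency class is routine --- take $H=\sbgrp{x_1^e,\ldots,x_r^e}$ for sufficiently divisible $e$ and argue by induction on class --- and yields a torsion-free finite-index subgroup $H\le G$ of computable index $d$. Splitting each exponent as $e_i=k_i+d\cdot e_i'$ with $0\le k_i<d$ reduces a knapsack instance in $G$ to finitely many knapsack instances in $H$ whose generators are the elements $x_i^d\in H$. Now any such torsion-free $H$ with $h([H,H])\le 1$ is automatically of nilpotency class at most $2$: if $[H,H]\cong\Z$, conjugation factors through $\operatorname{Aut}(\Z)=\{\pm1\}$, and a nontrivial action would force $[H,[H,H]]$ to have index $2$ in $[H,H]$, producing an infinite descending chain of lower central terms that contradicts nilpotency. (If $[H,H]=0$ then $H$ is abelian and knapsack reduces to Presburger.) I may therefore assume $H$ is torsion-free $2$-step nilpotent with $Z(H)\supseteq[H,H]\cong\Z$.

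Second, I would express the knapsack condition $x_1^{e_1}\cdots x_n^{e_n}=g$ in Malcev coordinates whose last coordinate generates $[H,H]\cong\Z$. A direct collection calculation in class $2$ shows that the abelian coordinates of the product are linear in $(e_1,\ldots,e_n)$, while the center coordinate is a quadratic form $Q(e_1,\ldots,e_n)$ whose coefficients are the integers $[x_i,x_j]\in\Z$ together with the $\binom{e_i}{2}$ contributions coming from the iterated powers $x_i^{e_i}$. The knapsack problem thus becomes the question of whether the system of linear Diophantine equations (for the abelian coordinates) together with the single quadratic equation $Q(e_1,\ldots,e_n)=c$ (for the center coordinate) has a solution in $\N^n$.

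The main obstacle is deciding this linear-quadratic Diophantine system. I would parameterize the nonnegative integer solutions of the linear part as an affine set $\{e_0+\sum_k t_k v_k\}\cap\N^n$ and substitute into $Q$, arriving at a single quadratic Diophantine equation in the parameters $t_k$ subject to polyhedral constraints. This is precisely the problem that \cite[Theorem 6.8]{konig2016knapsack} decides in the $H_3(\Z){\times}\Z^e$ case, and the argument there uses only the alternating nature of the commutator pairing and the fact that the remaining constraints are linear, not any specific feature of $H_3(\Z)$. It therefore carries over to our more general $H$, where the pairing $H/Z(H)\times H/Z(H)\to[H,H]\cong\Z$ may be an arbitrary antisymmetric integer-valued bilinear form and the kernel $Z(H)/[H,H]$ may have arbitrary rank; the larger abelian ambient contributes only additional linear constraints, leaving the quadratic part unchanged. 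All steps are effective from a presentation of $G$, yielding the required uniform decidability.
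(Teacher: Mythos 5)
Your proposal follows essentially the same route as the paper's proof: reduce to a torsion-free finite-index subgroup (where, as you correctly note and the paper leaves implicit, $h([G,G])=1$ forces nilpotency class $\le 2$), pass to Malcev coordinates, and turn the knapsack condition into a system of linear equations, one quadratic equation, and nonnegativity constraints. The only substantive difference is the final step: rather than asserting that the argument of \cite[Theorem 6.8]{konig2016knapsack} ``carries over'' --- a claim that needs care, since the resulting quadratic form is a general integral one (it includes the diagonal $\binom{\alpha_i}{2}$ contributions you mention, not just the alternating commutator pairing) --- the paper directly invokes the decidability of a single quadratic equation subject to linear equations and inequalities \cite{grunewald2004integer}, which is the result that Theorem 6.8 of \cite{konig2016knapsack} ultimately rests on as well, so your argument closes once that citation is made explicit.
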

\begin{proof}
    Given a presentation of a nilpotent groups $G$ with $h([G,G])=1$, a sequence $x_1,...,x_n\in G$ and $g\in G$, we need to decide whether $g\in x_1^*\cdots x_n^*$.  By \cite[Theorem 7.3]{konig2016knapsack}, the knapsack problem can be reduced to any subgroup $H$ of $G$ of finite index, and we observe that their reduction can be done uniformly in $G$ and $H$.  Therefore, by \Cref{prop:torsion_free_fi_subgroup} we may assume $G$ is torsion-free.  We build a Malcev basis for $G$: elements $a_1,..,a_r,z\in G$ such that $G/[G,G]=\sbgrp{\pi(a_1)}{\times}\cdots{\times}\sbgrp{\pi(a_r)}$ and $[G,G]=\sbgrp{z}$.
    Define $C:\Z^{r+1}\rightarrow G$ by $C(e_1,...,e_r,f)=a_1^{e_1}\cdots a_r^{e_r}z^f$. Then $C$ is a bijection, giving unique coordinates for $G$. We now express the product in these coordinates: if $C(\ee_3,f_3)=C(\ee_1,f_1)C(\ee_2,f_2)$, then $\ee_3=\ee_1+\ee_2$ and $f_3=f_1+f_2+Q(\ee_1,\ee_2)$ for a quadratic function $Q$ we can compute, as can be seen by reordering $a_1^{e_{1,1}}\cdots a_r^{e_{1,r}}z^{f_1}a_1^{e_{2,1}}\cdots a_r^{e_{2,r}}z^{f_2}$. We have to decide if there is a non-negative solution $\alpha_1,..,\alpha_n$ to $g=x_1^{\alpha_1}\cdots x_n^{\alpha_n}$. Let $x_i=C(\ee_i,f_i)$. Then:
\begin{gather*}
C(\sum\alpha_i\ee_i,\sum\alpha_if_i+f')=C(\ee_1,f_1)^{\alpha_1}C(\ee_2,f_2)^{\alpha_2}\cdots C(\ee_n,f_n)^{\alpha_n}
\\
f'=\sum_i\binom{\alpha_i}{2}Q(\ee_i,\ee_i)+\sum_{i<j}\alpha_i\alpha_jQ(\ee_i,\ee_j)
\end{gather*}
Equating the coordinates of $g$ to those of $x_1^{\alpha_1}\cdots x_n^{\alpha_n}$ we get a system with $r$ linear equations in the variables $\alpha_1,..,\alpha_n$ corresponding to the $a_i$ coordinates, a single quadratic equation in $\alpha_i$ corresponding to the $z$ coordinate, and non-negativity inequalities $\alpha_i\ge 0$. Such as system is decidable by \cite{grunewald2004integer}.
\end{proof}
\begin{Remark}
    Instead of passing to a torsion-free subgroup, we could work with the Malcev coordinates of the given group $G$ directly. In this approach, we need to be careful since coordinates are not unique anymore. Still, one can express $C(\ee_1,\ff_1)=C(\ee_2,\ff_2)$ as a system of equations, resulting in a system of linear equations, a single quadratic equation, and several linear and quadratic modular equations. The congruence classes of solutions to the modular equations can be enumerated, so the system is again decidable by \cite{grunewald2004integer}. 
\end{Remark}

We are ready for the second main result:
\begin{Theorem}
    The submonoid membership problem is decidable for f.g.\ nilpotent groups $G$ with $h([G,G])\le 2$. 
\end{Theorem}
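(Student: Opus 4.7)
The plan is to stitch together the three results developed earlier. By \Cref{thm:dim_gain}, the submonoid membership problem for $G$ with $h([G,G]) \le 2$ reduces, uniformly in the input, to the problem of deciding membership in products $S_1^* S_2^* \cdots S_k^*$ of f.g.\ submonoids inside subgroups $H \le G$ satisfying $[G,G] \subseteq H$ and $h([H,H]) < h([G,G])$. In particular, we may restrict attention to $H$ with $h([H,H]) \le 1$.

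Assume first that $h([H,H]) = 1$. For each $i$, apply \Cref{thm:bnddgen} to the finite set $S_i \subset H$ to compute a sequence $y_{i,1}, \ldots, y_{i,m_i} \in S_i^*$ with $S_i^* = y_{i,1}^* \cdots y_{i,m_i}^*$. Concatenating these factorizations yields
\[
S_1^* S_2^* \cdots S_k^* \;=\; y_{1,1}^* \cdots y_{1,m_1}^* \, y_{2,1}^* \cdots y_{2,m_2}^* \,\cdots\, y_{k,1}^* \cdots y_{k,m_k}^*,
\]
which is a knapsack expression in $H$. Since $h([H,H]) = 1$, \Cref{thm:knapsack} (applied to $H$ itself) decides membership in this set. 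The residual case $h([H,H]) = 0$ means $[H,H]$ is finite, so $H$ is virtually abelian and rational subset membership, hence membership in products of submonoids, is classically decidable; alternatively, the arguments in \Cref{thm:bnddgen} and \Cref{thm:knapsack} specialize without essential change when the $\Z$-factor of $[H,H]$ is trivial.

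The individual steps are short, so the genuine difficulty has already been absorbed into the previous theorems: establishing computable bounded generation under $h([G,G]) = 1$ (\Cref{thm:bnddgen}) and decidability of knapsack with a single quadratic equation (\Cref{thm:knapsack}). The only point that demands attention here is uniformity --- that Bodart's reduction outputs the data $(H, S_1, \ldots, S_k)$ effectively from the input, that the bounded-generation sequences $y_{i,j}$ are produced from the $S_i$ by an algorithm, and that the knapsack decision procedure is uniform in the presentation of $H$ --- and each of these is already built into the statements we invoke.
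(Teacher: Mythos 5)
Your proof is correct and follows essentially the same route as the paper: reduce via Bodart's reduction (\Cref{thm:dim_gain}) to membership in products of submonoids inside a subgroup $H$ with $h([H,H])\le 1$, rewrite each factor as a product of cyclic submonoids using \Cref{thm:bnddgen}, and decide the resulting knapsack instance with \Cref{thm:knapsack}. Your explicit handling of the degenerate case $h([H,H])=0$ is a small, welcome addition that the paper leaves implicit.
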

\begin{proof}
    By \cite[Theorem A]{bodart2024membership}, the submonoid membership problem in $G$ can be reduced to finitely many instances of membership in products of submonoids of a subgroup $H\le G$ with $h([H,H])\le 1$. By \Cref{thm:bnddgen}, each submonoid in the product can be replaced by a product of cyclic submonoids, leaving us with an instance of the knapsack problem, which is decidable by \Cref{thm:knapsack}.
\end{proof}

As Bodart points out in \cite{bodart2024membership}, we can also derive decidability of submonoid membership for some nilpotent groups with $h([G,G])=3$, as long as they have no subgroup $H$ with $[G,G]\le H\le G$ such that $h([H,H])=2$.

\section{Open Problems}
Decidability of the rational subset membership problem in Heisenberg groups $H_{2n+1}(\Z)$ for $n\ge 2$ remains open.
Another interesting question is what other groups have the property that every f.g.\ submonoid is boundedly generated, in particular what other nilpotent groups have this property.
Another challenge is to improve the bounds for \Cref{lem:no_torsion} and \Cref{thm:bnddgen}.
The number of cyclic factors in \Cref{thm:bnddgen} is $4ne(b^2+1)$. 
Improving $b$ and $e$ to polylogarithmic factors, would make the number of cyclic submonoids polynomial in the input size, even if the presentation of $G$ and $x_1,...,x_n$  are given efficiently in binary Malcev coordinates. 
If bounded generation is proven for all $2$-step nilpotent groups, the dependence on $b$ and $e$ can be removed entirely, as we may take $x_1,..,x_n$ to be generators of a free nilpotent group.
 
\printbibliography

@article{konig2016knapsack,
  title={Knapsack and subset sum problems in nilpotent, polycyclic, and co-context-free groups},
  author={K{\"o}nig, Daniel and Lohrey, Markus and Zetzsche, Georg},
  journal={Algebra and Computer Science},
  volume={677},
  pages={138--153},
  year={2016},
  url={https://arxiv.org/abs/1507.05145},
  publisher={American Mathematical Society Providence, RI}
}

@article{grunewald2004integer,
url = {https://doi.org/10.1515/crll.2004.023},
title = {On the integer solutions of quadratic equations},
author = {Grunewald, Fritz and Segal, Dan},
pages = {13--45},
volume = {2004},
number = {569},
journal = {Journal für die reine und angewandte Mathematik},
doi = {doi:10.1515/crll.2004.023},
year = {2004},
publisher = {Walter de Gruyter GmbH \& Co. KG Berlin, Germany}
}

@article{roman2023undecidability,
  title={Undecidability of the submonoid membership problem for a sufficiently large finite direct power of the Heisenberg group},
  author={Roman'kov, Vitalii Anatol'evich},
  journal={Sibirskie Elektronnye Matematicheskie Izvestiya [Siberian Electronic Mathematical Reports]},
  volume={20},
  number={1},
  pages={293--305},
  year={2023},
  url={https://arxiv.org/abs/2209.14786}
}

@article{colcombet2019reachability,
  title={On reachability problems for low-dimensional matrix semigroups},
  author={Colcombet, Thomas and Ouaknine, Jo{\"e}l and Semukhin, Pavel and Worrell, James},
  journal={arXiv preprint arXiv:1902.09597},
  url={https://arxiv.org/abs/1902.09597},
  year={2019}
}

@article{bodart2024membership,
  title={Membership problems in nilpotent groups},
  author={Bodart, Corentin},
  journal={arXiv preprint arXiv:2401.15504},
  url={https://arxiv.org/abs/2401.15504},
  year={2024}
}

@article{shafrir2024saturation,
  title={A saturation theorem for submonoids of nilpotent groups and the Identity Problem},
  author={Shafrir, Doron},
  journal={arXiv preprint arXiv:2402.07337},
  url={https://arxiv.org/abs/2402.07337},
  year={2024}
}

@misc{BodartOrdering,
	shorthand={BCM24+},
	title={Ordering nilpotent groups}, 
	author={Corentin Bodart and Laura Ciobanu and George Metcalfe},
	year={In preparation}}
\end{document}